\newtheorem{theorem}{Theorem}
\newtheorem{lemma}[theorem]{Lemma}
\theoremstyle{remark}
\newtheorem{remark}[theorem]{Remark}
\begin{document}

\bibliographystyle{short}

\def\aM{M}
\def\Mat{{\mathbb{M}}}
\def\Z{{\mathbb{Z}}}
\def\N{{\mathbb{N}}}
\def\Rl{{\mathbb{R}}}
\def\Cx{{\mathbb{C}}}
\def\Lip{{\mathrm{Lip}\,}}

\def\dint{\mathop{\int\kern-6pt\int}}

\baselineskip=1.5\baselineskip

\title{Operator-Lipschitz functions in Schatten-von Neumann classes}
\author{Denis Potapov}
\email{d.potapov@unsw.edu.au}
\thanks{Research is partially supported by ARC}
\thanks{Corresponding author: f.sukochev@unsw.edu.au}
\author{Fedor Sukochev}
\email{f.sukochev@unsw.edu.au}
\address{School of Mathematics \& Statistics, University of NSW,
  Kensington NSW 2052 AUSTRALIA}

\keywords{Operator-Lipschitz functions, Schatten-von Neumann ideals}
\subjclass[2000]{47A56, 47B10, 47B47}

\maketitle

\begin{abstract}
  This paper resolves a number of problems in the perturbation theory
  of linear operators, linked with the 45 years old conjecture of
  M.G. Krein.  In particular, we prove that every Lipschitz function
  is operator Lipschitz in the Schatten-von Neumann ideals~$S^\alpha$,
  $1 < \alpha < \infty$. Alternatively, for every~$1 < \alpha <
  \infty$, there is a constant~$c _\alpha > 0$ such that $$ \left\|
    f(a) - f(b) \right\|_\alpha \leq \, c_\alpha \, \left\| f
  \right\|_{\Lip 1} \, \left\| a - b \right\|_\alpha, $$ where~$f$ is
  a Lipschitz function with $$ \left\| f \right\|_{\Lip 1} : =
  \sup_{\lambda, \mu \in \Rl \atop \lambda \neq \mu}\left| \frac
    {f(\lambda) - f(\mu)}{\lambda - \mu} \right| < + \infty $$ and
  where~$\left\| \cdot \right\|_\alpha$ is the norm of~$S^\alpha$
  and~$a, b$ are self-adjoint linear operators such that~$a - b \in
  S^\alpha$.
\end{abstract}

Denote by~$F_\alpha$ the class of functions~$f: \Rl \mapsto \Cx$ such
that $$ f(a) - f(b) \in S^\alpha $$ for any self-adjoint~$a, b$ such
that~$a -b \in S^\alpha$ and put $$ \left\| f \right\|_{F_\alpha} :=
\sup_{a, b} \frac {\left\| f(a) - f(b) \right\|_\alpha}{\left\| a -b
  \right\|_\alpha}. $$ In~\cite{KreinPerturbation1964}, M.G.~Krein
conjectured that the condition~$f' \in L^\infty$ is sufficient for~$f
\in F_1$.  The fact that this conjecture does not hold was established
by Yu.~Farforovskaya in~1972, see~\cite{Farforovskaya1972}.  Earlier,
in~1967 she had also established that the analogue of Krein's
conjecture does not hold for the case~$\alpha=\infty$,
see~\cite{Farforovskaja1968,Farforovskaja1967}.  Later,
T.~Kato~\cite{Kato1973} (respectively, E.B.~Davies,~\cite{Davies1988})
showed that already the absolute value function ~$f(t) = \left| t
\right|$ does not belong to~$F_\infty$ (respectively,~$F_1$).
Alternative approach to the study of class~$F_1$ was developed by
V.~Peller in~\cite{Pe1985}, where he proved that~$B^{1}_{\infty 1}
\subseteq F_1$ and every~$f \in F_1$ belongs to~$B^1_{11}$ locally,
where~$B^{s}_{pq}$ are Besov spaces; this approach also shows that the
class~$F_1$ is properly contained in the class of all Lipschitz
functions, since~$\Lip 1 \not \subseteq B^1_{11}$.  However,
the question whether the class~$F_\alpha$ contains all Lipschitz
functions when~$1<\alpha<\infty$, $\alpha \neq 2$ remained open.  For
example, $f(t) =\left| t \right|$ belongs to~$F_\alpha$ for
all~$1<\alpha<\infty$, see, e.g.,~\cite{Davies1988,
  KosakiLipCont1992-MR1152759, DDPS1997}.  Another sufficient
condition ensuring that~$f \in F_\alpha$, for every~$1 < \alpha <
\infty$ was obtained in~\cite{Davies1988, PSW-DOI} requires that
derivative~$f'$ is bounded and has bounded total variation.  On the
other hand, addressing the problem of description of the
classes~$F_\alpha$, $1<\alpha<\infty$, $\alpha \neq 2$, in her~1974
paper~\cite{Farforovskaya1974}, Yu.~Farforovskaya observes that ``a
slight change in the proof given in ~\cite{Farforovskaya1972}'' leads
to the existence of a Lipschitz function~$f_\alpha$ which does not
belong to~$F_\alpha$ for every~$1<\alpha<2$ and further proceeds with
an explicit ``example'' of~$f_\alpha$ such that~$f' \in L^\infty$,
but~$f \notin F_\alpha$, $2 < \alpha < \infty$.  Presenting the same
problem in~\cite{Peller1987}, V.~Peller conjectured that~$f \in
F_\alpha$, $1 \leq \alpha < 2$ implies that the lacunary Fourier
coefficients of~$f'$ satisfy $$ \left\{ \hat {f'}(2^n) \right\}_{n
  \geq 0} \in \ell^\alpha. $$

Finally, shortly before this paper was written, F.~Nazarov and
V.~Peller discovered that~$f(a) - f(b) \in S^{1, \infty}$,
provided~$f$ is Lipschitz and~$a, b$ are self-adjoint linear operators
such that~$a - b$ is one-dimensional, where~$S^{1, \infty}$ is the
weak trace ideal of compact operators.  From
here, they show also that $$ \left\| f(a) - f(b) \right\|_{\Omega}
\leq c_0\, \left\| a - b\right\|_1, $$ for every self-adjoint~$a, b$
such that~$a - b \in S^1$, where~$S^\Omega$ is dual to the Matsaev
ideal~$S^\omega$ (see~\cite{PelNaz2009} for details).

The main objective of the present paper is to show that in fact
M.G.~Krein's conjecture holds for all~$1 < \alpha < \infty$, that
is~$f' \in L^\infty$ implies~$\left\| f \right\|_{F_\alpha} <
\infty$. Equivalently, $F_\alpha$, $1<\alpha<\infty$ coincides with
the class of all Lipschitz functions. In particular, this shows that
Farforovskaya's remark concerning~$F_\alpha$, $1<\alpha<2$ and her
result for~$F_\alpha$, $2<\alpha<\infty$ given
in~\cite{Farforovskaya1974} do not hold and that the conjecture of
V.~Peller~\cite{Peller1987} does not hold either. 

The main result of the paper is the following theorem whose proof is
based on Theorems~\ref{TheoremOfTheCentury}.

\begin{theorem}
  \label{LipschitzContCompact}
  Let~$f$ be a Lipschitz function and let~$\left\| f \right\|_{\Lip 1}
  \leq 1$.  For every~$1 < \alpha < \infty$ there is a
  constant~$c_\alpha > 0$ such that 
  \begin{equation}
    \label{LipschitzEst}
    \left\| f(a) - f(b)
    \right\|_\alpha \leq \, c_\alpha \, \left\| a - b\right\|_\alpha, 
  \end{equation}
  where~$a$ and~$b$ are self-adjoint (possibly unbounded) linear
  operators such that~$a - b \in S^\alpha$.
\end{theorem}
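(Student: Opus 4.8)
The plan is to strip away, by standard approximation arguments, everything but a single uniform boundedness statement for Schur multipliers of divided differences on finite-dimensional Schatten classes, and then to invoke Theorem~\ref{TheoremOfTheCentury} for that statement; the substance of the paper lies in the latter.

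\emph{Reduction to bounded operators and smooth $f$.} For a bounded Lipschitz function $g$ with $\|g\|_{\Lip 1}\le 1$ and $g(t)=t$ on a large interval, $f\circ g$ is Lipschitz with $\|f\circ g\|_{\Lip 1}\le\|f\|_{\Lip 1}\le1$, the operators $g(a),g(b)$ are bounded, and $(f\circ g)(a)-(f\circ g)(b)=f(g(a))-f(g(b))$. Using that $h_k(a)\to h(a)$ strongly whenever $h_k\to h$ pointwise with a common Lipschitz bound, that $g(a)-g(b)\to a-b$ in $S^\alpha$ (the associated double operator integrals converge), and that $\|\cdot\|_\alpha$ is lower semicontinuous for strong convergence, one lets $g$ tend to the identity and thereby reduces~\eqref{LipschitzEst} to bounded $a,b$. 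Mollifying $f$ (now $f_\varepsilon\to f$ uniformly on compacta, so $f_\varepsilon(a)\to f(a)$ in operator norm) and applying the same principle once more reduces further to $f\in C^1$ with $f'$ of compact support, still with $\|f\|_{\Lip 1}\le1$.

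\emph{Double operator integral and discretisation.} For bounded self-adjoint $a,b$ and $f$ as above, the Birman--Solomyak formula yields
\[
  f(a)-f(b)=\dint \psi_f(\lambda,\mu)\,dE_a(\lambda)\,(a-b)\,dE_b(\mu),
  \qquad \psi_f(\lambda,\mu):=\frac{f(\lambda)-f(\mu)}{\lambda-\mu},
\]
so that~\eqref{LipschitzEst} is equivalent to $\|T^{a,b}_{\psi_f}\|_{S^\alpha\to S^\alpha}\le c_\alpha$ for the transformer $T^{a,b}_{\psi_f}$. Partitioning a bounded interval containing the spectra of $a$ and $b$ into small cells $\Delta_i$, choosing sample points $\lambda_i,\mu_j$, and replacing $E_a,E_b$ by the corresponding discrete measures, one approximates $T^{a,b}_{\psi_f}$ in the strong operator topology by the ordinary Schur multiplier $x=(x_{ij})\mapsto(\psi_f(\lambda_i,\mu_j)\,x_{ij})$ on $S^\alpha_n$. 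By lower semicontinuity of $\|\cdot\|_\alpha$ it therefore suffices to bound these Schur multipliers by $c_\alpha$ uniformly in $n$, in the partition and in the choice of sample points --- which is exactly (a consequence of) Theorem~\ref{TheoremOfTheCentury}.

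\emph{Conclusion and the main obstacle.} Reversing the two approximations of the first step transports the estimate from $f\in C^1$ to arbitrary Lipschitz $f$ and then from bounded to unbounded $a,b$ with $a-b\in S^\alpha$, each passage combining strong convergence of the operator functions with lower semicontinuity of the $S^\alpha$ norm; together with the previous step this gives~\eqref{LipschitzEst}. The only genuinely hard point is the uniform Schur-multiplier bound invoked above: the divided difference of a Lipschitz function is in general neither continuous nor of the form $\int a(\lambda,\omega)\,b(\mu,\omega)\,d\nu(\omega)$ with $\int\|a(\cdot,\omega)\|_\infty\|b(\cdot,\omega)\|_\infty\,d\nu(\omega)<\infty$ --- precisely the reason why the analogue of~\eqref{LipschitzEst} fails at $\alpha=1$ and $\alpha=\infty$ --- so its boundedness on $S^\alpha$ for $1<\alpha<\infty$ cannot follow from any pointwise or absolute estimate and must instead be drawn from the UMD geometry of $S^\alpha$, e.g.\ by feeding a dyadic (Littlewood--Paley) decomposition of $\psi_f$ into vector-valued square-function and triangular-truncation inequalities. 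That is the heart of Theorem~\ref{TheoremOfTheCentury}, and hence of the paper.
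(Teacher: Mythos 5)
Your overall architecture coincides with the paper's: mollify $f$, discretise the spectral measures so that the double operator integral becomes a Schur multiplier of divided differences at grid points, invoke Theorem~\ref{TheoremOfTheCentury}, and undo the approximations by weak/strong convergence together with lower semicontinuity of $\left\|\cdot\right\|_\alpha$; the paper packages this as Theorem~\ref{TheoremOfTheCenturyCont} and then passes to~(\ref{LipschitzEst}) through the commutator estimate $\left\|[f(u),v]\right\|_\alpha\le c_\alpha\left\|[u,v]\right\|_\alpha$ via the $2\times2$ matrix trick and \cite[Theorem~5.3]{PoSu}, rather than through the Birman--Solomyak identity directly. One of your steps is imprecise: Theorem~\ref{TheoremOfTheCentury} bounds the multiplier with entries $\frac{f(k)-f(j)}{k-j}$ at \emph{integer} points only, so ``choosing sample points $\lambda_i,\mu_j$'' in arbitrary cells does not literally land you in its hypotheses. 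You must take a uniform grid $\frac1m\Z$, for which $\frac{f(j/m)-f(k/m)}{j/m-k/m}$ is the integer divided difference of the rescaled function $t\mapsto mf(t/m)$ (same Lipschitz constant) --- exactly what the paper does --- or add a perturbation argument moving arbitrary sample points onto such a grid. This is fixable but not free.

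More seriously, your reduction from unbounded to bounded $a,b$ is circular as written. To let $g$ tend to the identity you need $g(a)-g(b)\to a-b$ in $S^\alpha$, i.e.\ $\left\|h(a)-h(b)\right\|_\alpha\to0$ for $h=g-\mathrm{id}$, which is itself an operator-Lipschitz estimate for the \emph{unbounded} pair $(a,b)$ --- an instance of the very statement being proved; the parenthetical ``(the associated double operator integrals converge)'' presupposes both the transformer bound for the spectral measures of unbounded operators and the identity $h(a)-h(b)=T_{\psi_h}(a-b)$ in that generality. The paper sidesteps this: Theorem~\ref{TheoremOfTheCenturyCont} is proved for arbitrary spectral measures (boundedness of the underlying operators is never used --- Duhamel's formula in Lemma~\ref{ExpEst} needs only $A-B$ bounded), and the link between the transformer bound and~(\ref{LipschitzEst}) for possibly unbounded $a,b$ is delegated to the commutator formulation and \cite[Theorem~5.3]{PoSu}. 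You should either adopt that route or supply an independent argument for the unbounded case. The rest of your outline --- and your diagnosis that the whole weight rests on Theorem~\ref{TheoremOfTheCentury} and the UMD-based Marcinkiewicz multiplier argument behind it --- is accurate.
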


The proof of Theorem~\ref{LipschitzContCompact} is given at the end of
Section~\ref{sec:doi-dd}.

The symbol~$c_\alpha$ shall denote a positive numerical constant which
depends only on~$\alpha$, $1\leq \alpha \leq \infty$ and which may
vary from line to line or even within a line.

\section{Schur multipliers of divided differences}
\label{sec:discrete}

Although\footnote{The proof presented in this section was
  substantially simplified in comparison to the firstly circulated
  argument.  A similar simplification was observed independently by
  Mikael de la Salle, \cite{MikalDeLaSalle}.}  the principal result of
the paper is proved for the ideals of compact operators, in the
present section, we shall work in the setting of an arbitrary
semifinite von Neumann algebra.  This wider setting brings no
additional difficulties to our considerations but allows very succinct
notations.  Yet a reader unfamiliar with theory of semifinite von
Neumann algebras may think of the algebra of all bounded linear
operators on~$\ell^2$ equipped with the standard trace instead of the
couple~$(\aM, \tau)$ and of the Schatten-von Neumann ideals~$S^\alpha$
instead of the noncommutative spaces~$L^\alpha$.

Let~$\aM$ be a von Neumann algebra with normal semifinite faithful
trace~$\tau$.  Let~$L^\alpha$, $1 \leq \alpha \leq \infty$ be the
$L^p$-space with respect to the couple~$(\aM, \tau)$
(see~\cite{PiXu2003}).

Let~$\left( e_k \right)_{k \in \Z} \subseteq \aM$ be a sequence of
mutually orthogonal projections and let~$f: \Rl \mapsto \Cx$ be a
Lipschitz function.  We shall study the following linear operator 
\begin{equation}
  \label{SM-sym-def}
  Tx = \sum_{k, j \in \Z} \phi_{kj} e_k x e_j,\ \ \phi_{kj} = \frac
  {f(k) - f(j)}{k -j},\ k \neq j,\ \ \phi_{kk} = 0. 
\end{equation}
We keep fixed the sequence~$\left( e_k \right)_{k \in \Z}$, the
function~$f$ and the operator~$T$ in the present section.

\begin{theorem}
  \label{TheoremOfTheCentury}
  If\/~$\left\| f \right\|_{\Lip 1} \leq 1$, then the operator~$T$ is
  bounded on every space~$L^\alpha$, $1 < \alpha < \infty$.
\end{theorem}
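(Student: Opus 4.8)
The plan is to reduce Theorem~\ref{TheoremOfTheCentury} to the boundedness of the triangular truncation on $L^\alpha(\aM)$, $1<\alpha<\infty$ (equivalently, to the UMD property of $L^\alpha(\aM)$), by exploiting the ``divided difference'' structure of the symbol $\phi_{kj}$.

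\emph{Reductions.} Splitting $f$ into real and imaginary parts, and then writing a real $1$-Lipschitz $f$ as a difference of two nondecreasing $1$-Lipschitz functions (via $f'=(f')^{+}-(f')^{-}$), one may assume $f$ is nondecreasing with $0\le f'\le1$. Replacing $f$ by the piecewise affine function that agrees with it on $\Z$ leaves $T$ unchanged and does not increase $\|f\|_{\Lip 1}$, so one may assume $f'=\sum_{n\in\Z}\epsilon_n\,\mathbf 1_{[n,n+1)}$ with $\epsilon_n\in[0,1]$; by convexity it then suffices to treat $\epsilon_n\in\{0,1\}$, and the estimate has to be proved uniformly in the sequence $(\epsilon_n)$.

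\emph{Structural decomposition.} Put $D=\sum_k k\,e_k$, $P_n=\sum_{j\le n}e_j$, and let $\mathcal G$ be the Schur multiplier with symbol $(k-j)^{-1}$ off the diagonal and $0$ on it. Since $(1/n)_{n\neq0}$ are the Fourier coefficients of the bounded sawtooth function on $\mathbb T$, the Toeplitz Schur multiplier $\mathcal G$ is bounded on every $L^\beta(\aM)$, $1\le\beta\le\infty$. Because $\phi_{kj}=(f(k)-f(j))\cdot(k-j)^{-1}$, entrywise $T=\mathrm{ad}_{f(D)}\circ\mathcal G$; using $\mathrm{ad}_{f(D)}=\int_{\Rl}f'(t)\,\mathrm{ad}_{\mathbf 1_{(t,\infty)}(D)}\,dt$ together with $\mathbf 1_{(t,\infty)}(D)=1-P_n$ for $t\in(n,n+1)$, this becomes
$$ Tx \;=\; -\sum_{n\in\Z}\epsilon_n\,[\,P_n,\ \mathcal G x\,]. $$
Now decompose $\mathcal G x=\mathcal G_{+}x+\mathcal G_{-}x$ into its strictly upper and strictly lower triangular parts, which is legitimate on $L^\alpha(\aM)$, $1<\alpha<\infty$, since the triangular truncation is bounded there. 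Because $\mathcal G_{+}x$ is upper triangular, one of the two corners of $[P_n,\mathcal G_{+}x]$ vanishes, whence $[P_n,\mathcal G_{+}x]=-(1-P_n)(\mathcal G_{+}x)P_n$, and symmetrically $[P_n,\mathcal G_{-}x]=P_n(\mathcal G_{-}x)(1-P_n)$. Thus, up to a symmetric term, the task is to bound the operator $x\mapsto\sum_{n\in\Z}\epsilon_n\,(1-P_n)(\mathcal G_{+}x)P_n$ on $L^\alpha(\aM)$, uniformly in $(\epsilon_n)$.

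\emph{The crux, and the main obstacle.} One cannot simply move $\mathcal G_{+}$ outside: the map $y\mapsto\sum_n\epsilon_n(1-P_n)yP_n$ has an unbounded symbol (equal to $\sum_{j\le m<k}\epsilon_m$ on the entries with $k>j$), so the inequality is false for a general $y$ and holds only through the interplay of the corner truncations with the range of $\mathcal G_{+}$ — which is exactly why $T$ fails to be bounded on $S^1$ and $S^\infty$ and why no triangle-inequality argument can work. I would instead read $n\mapsto(1-P_n)(\mathcal G_{+}x)P_n$ as a martingale-type (Littlewood--Paley) decomposition: its consecutive differences equal $[e_n,\mathcal G_{+}x]$, which are supported on almost disjoint ``hooks'' (the $n$-th row below the diagonal together with the $n$-th column above it), so one expects a square-function inequality on $L^\alpha(\aM)$, $1<\alpha<\infty$, allowing the scalars $\epsilon_n$ to be removed unconditionally. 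Turning this into a proof — establishing the orthogonality and the square-function bound, i.e.\ reducing the summation over $n$ to the boundedness of the triangular truncation via the noncommutative Burkholder--Gundy inequality, or, alternatively, expanding $\mathcal G_{+}=\int_0^1 s^{-1}W_s\,ds$ with $W_sx=\sum_{k>j}s^{k-j}e_kxe_j$, factoring each corner truncation of $W_s$ through diagonal contractions, and controlling the resulting sum over $n$ and integral over $s$ by a randomised estimate — is the step that carries essentially all the difficulty, and the one I expect to be the main obstacle. (Passing from this discrete statement to the general, possibly unbounded, self-adjoint $a$ and $b$ of Theorem~\ref{LipschitzContCompact} is of a different, softer nature and is done later in the paper.)
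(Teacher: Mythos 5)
Your reductions are sound and, up to the point where you set up $Tx=-\sum_n\epsilon_n[P_n,\mathcal Gx]$, essentially parallel the paper's own preliminary steps (the paper likewise reduces, via the increments $a_m=f(m)-f(m-1)$ and an extreme-point argument, to a non-decreasing integer-valued $f$, and likewise invokes the boundedness of the triangular truncation to work with one triangle at a time). The factorization $T=\mathrm{ad}_{f(D)}\circ\mathcal G$ with $\mathcal G$ a Toeplitz Schur multiplier of a finite measure is correct, as is the identification of each $[P_n,\,\cdot\,]$ restricted to a triangle with a single corner truncation (modulo a swapped sign/triangle label, which is immaterial). But the proof stops exactly where the theorem begins: the uniform bound on $x\mapsto\sum_n\epsilon_n\,(1-P_n)(\mathcal G_+x)P_n$ is asserted as "the main obstacle" and never established. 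Moreover, the square-function strategy you sketch does not obviously close: unconditionality would apply to the \emph{differences} $\Delta_n=Q_n-Q_{n-1}$ (your "hooks"), whereas the operator at hand is $\sum_n\epsilon_n Q_n=\sum_m\bigl(\sum_{n\ge m}\epsilon_n\bigr)\Delta_m$, whose coefficients on the unconditional pieces are unbounded partial sums; so the scalars $\epsilon_n$ cannot simply be "removed unconditionally", and the whole burden falls back on the unproved interplay with $\mathcal G_+$. As it stands this is a genuine gap, not a routine verification.

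For comparison, the paper closes this gap by a completely different device: the integral representation $\lambda/\mu=\int_{\Rl}g(s)\,\lambda^{is}\mu^{-is}\,ds$ (valid for $0\le\lambda/\mu\le2$, with $\int|s|^n|g(s)|\,ds<\infty$), applied with $\lambda=f(j)-f(k)$ and $\mu=j-k$. This decouples numerator and denominator and reduces the problem to showing that the Schur multipliers with symbols $(f(j)-f(k))^{is}$ and $(j-k)^{-is}$ have $L^\alpha$-norm $O\bigl((1+|s|)\bigr)$. That, in turn, is obtained by conjugating with the unitaries $u_t=\sum_ke^{2\pi if(k)t}e_k$, which turns these Schur multipliers into Fourier multipliers on $L^2([0,1],L^\alpha)$ with symbol $n^{is}$, of dyadic variation $O(|s|)$, so Bourgain's vector-valued Marcinkiewicz theorem (UMD property of $L^\alpha$) applies. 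If you want to salvage your outline, this transference-plus-Marcinkiewicz step is the missing idea you would need to supply in place of the hoped-for square-function inequality.
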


The symbol~$c_\alpha$ shall denote a positive numerical constant which
depends only on~$\alpha$, $1\leq \alpha \leq \infty$ and which may
vary from line to line or even within a line.

\begin{proof}[Proof of Theorem~\ref{TheoremOfTheCentury}]
  Without loss of generality, we may assume that~$f(0) = 0$ and
  that~$f$ is real-valued.

  Let us fix~$x \in L^\alpha$ and~$y \in L^{\alpha'}$,
  where~$\alpha^{-1} + \alpha'^{-1} = 1$, $1 < \alpha, \alpha' <
  \infty$.  We shall prove that $$ \left| \tau \left( y Tx \right)
  \right| \leq c_\alpha \, \left\| x \right\|_\alpha \, \left\| y
  \right\|_{\alpha'}. $$ Recall that the triangular truncation is a
  bounded linear operator on~$L^\alpha$, $1< \alpha < \infty$
  (e.g.~\cite{DDPS1997}).  Thus, we may further assume that the
  operators~$x$ is upper-triangular and~$y$ is
  lower-triangular.\footnote{An element~$x \in \aM$ is called {\it
      upper-triangular\/} (with respect to the sequence~$\left( e_k
    \right)_{k \in \Z}$) if and only if~$e_k x e_j = 0$ for every~$k >
    j$.  It is called {\it lower-triangular\/} if and only if~$x^*$ is
    upper-triangular.}  For every element~$z \in \aM$, we set~$z_{kj}
  := e_k z e_j$ for brevity.  Now we can write
  \begin{equation}
    \label{DecompositionI}
    \tau\left( y Tx
    \right) = \sum_{k < j} \tau \left( y_{jk} \phi_{kj} x_{kj}
    \right). 
  \end{equation}

  Let us show that we also may assume that the function~$f$ takes only
  integral values at integral points.  Indeed, by setting~$a_k = f(k)
  - f(k-1)$, we have $$ \phi_{kj} = \frac {1}{j-k} \sum_{k < m \leq j}
  a_m,\ \ k < j. $$ Thus, we continue $$ \tau \left( y Tx \right) =
  \frac 1{j - k} \sum_{k < j} \tau \left( y_{jk} x_{kj} \right)\,
  \sum_{k < m \leq j} a_m = \sum_{m \in \Z} a_m \sum_{k < m \leq j}
  \frac {\tau \left( y_{jk} x_{kj} \right)}{j - k}. $$ Recall that we
  have to show $$ \left| \tau \left( y Tx \right) \right| \leq
  c_\alpha \, \left\| x \right\|_\alpha \, \left\| y
  \right\|_{\alpha'}, $$ for every sequence~$\left( a_m \right) \in
  \ell^\infty$ with~$\left\| \left( a_m \right)\right\|_\infty \leq
  1$.  From this, it is clear that it is sufficient to take~$a_m = \pm
  1$ and thus, the function~$f$ takes only integral values at integral
  points, since $$ f(k) = f(k) - f(0) = \sum_{1 \leq m \leq k} a_m. $$
  We also may assume that the function~$f$ is non-decreasing
  (otherwise, we take the function~$f_1(t) = f(t) + t$).  Thus, from
  now on we assume that~$f$ takes integral values at integral points,
  $f$ is non-decreasing and $$ 0 \leq f(k) - f(j) \leq 2 \left( k - j
  \right),\ \ j \leq k,\ j, k \in \Z. $$ According to
  Lemma~\ref{DecompositionLemma}, we have
  \begin{equation}
    \label{DecompositionII}
    \phi_{kj} =
    \int_\Rl g(s)\, (f(j) - f(k))^{is} \, \left( j - k \right)^{-is}\,
    ds,\ \ k < j 
  \end{equation}
  where~$g: \Rl \mapsto \Cx$ such that
    \begin{equation}
    \label{Gfunction}
    \int_{\Rl} \left| s \right|^n \, \left| g(s)  \right| \, ds < +
    \infty,\ \ n \geq 0.
  \end{equation}
  With this in mind, we now see from~(\ref{DecompositionI})
  and~(\ref{DecompositionII}) $$ \tau \left( y Tx \right) = \int_\Rl
  g(s)\, \tau \left( y_s x_s \right)\, ds , $$ where (as in
  Lemma~\ref{MarcCorl}) $$ y_s = \sum_{k < j} \left( f(j) - f(k)
  \right)^{is} y_{jk} \ \ \text{and}\ \ x_s = \sum_{k < j} (j -
  k)^{-is} x_{kj}. $$

  Now it follows from Lemma~\ref{MarcCorl} that $$ \left| \tau \left(
      y_s x_s \right) \right| \leq \, c_\alpha \, \left( 1 + \left| s
    \right| \right)^2 \, \left\| x\right\|_\alpha \, \left\|
    y\right\|_{\alpha'} $$ and therefore, from~(\ref{Gfunction}),
   $$ \left| \tau \left( y Tx \right) \right| \leq c_\alpha \, \left\| x
   \right\|_\alpha \, \left\| y \right\|_{\alpha'}\, \int_\Rl \left( 1
     + \left| s \right| \right)^2 \, \left| g(s) \right| \, ds \leq
   c_\alpha \, \left\| x \right\|_\alpha \, \left\|
     y\right\|_\alpha. $$

\end{proof}

\begin{remark}
\label{TheoremOfTheCenturyRem}
The operator~$T$ in~(\ref{SM-sym-def}) can be also defined with
respect to two families of orthogonal projections: if~$\left( e_k
\right)_{k \in \Z}$ and~$\left( f_j \right)_{j \in \Z}$ are two
families of orthogonal projections, then
\begin{equation}
  \label{SM-def}
  \tilde Tx = \sum_{k, j \in \Z} \phi_{kj} e_k x f_j, 
\end{equation}
where~$\phi_{kj}$ as in~(\ref{SM-sym-def}).  In this case, the
operator~$\tilde T$ is also bounded on every~$L^\alpha$, $1 < \alpha <
\infty$ provided~$\left\| f \right\|_{\Lip 1} \leq 1$.  Indeed, to see
the latter it is sufficient to consider operator~$T$ as
in~(\ref{SM-sym-def}) with respect to the family of orthogonal
projections $$ \left\{ e_{j} \otimes e_{11} + f_j \otimes e_{22}
\right\}_{j \in \Z} $$ in the tensor product von Neumann algebra~$M
\otimes \Mat_2$, where~$\Mat_2$ is the algebra of $2 \times
2$-matrices, and apply Theorem~\ref{TheoremOfTheCentury}.
\end{remark}

To prove Lemma~\ref{MarcCorl} used in the proof of
Theorem~\ref{TheoremOfTheCentury}, we firstly need the following
result whose proof rather quickly follows from the vector-valued
Marcinkiewicz multiplier theorem.

\begin{theorem}
  \label{MarcThm}
  Let\/~$\lambda = \left( \lambda (n) \right)_{n \in \Z}$ be a
  sequence of complex numbers such that $$ \sup_{n \in \Z} \left|
    \lambda (n) \right| \leq 1. $$ If total variation of\/~$\lambda$
  over every dyadic interval~$2^k \leq \left| n \right| \leq 2^{k +
    1}$, $k \geq 0$ does not exceed~$1$, then the linear operator~$S$
  defined by $$ Sx = \sum_{k, j \in \Z} \lambda (f(j) - f(k))\, e_k x
  e_j,\ \ x \in L^\alpha $$ is bounded on every~$L^\alpha$, $1 <
  \alpha < \infty$, where~$f: \Z \mapsto \Z$ is any non-decreasing
  integral valued function.
\end{theorem}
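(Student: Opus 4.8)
The plan is to realize the operator $S$ as a Fourier (or rather spectral) multiplier on a suitable group, and then to invoke a vector-valued Marcinkiewicz multiplier theorem. First I would observe that since $f\colon \Z\mapsto\Z$ is non-decreasing and integer-valued, the family of projections $\left(e_k\right)_{k\in\Z}$ can be regrouped: set $p_m = \sum_{k\,:\,f(k)=m} e_k$ for $m\in\Z$. These $p_m$ are again mutually orthogonal projections (some possibly zero), and in terms of them we may write
\begin{equation}
  \label{eq:Srewrite}
  Sx = \sum_{m, n\in\Z} \lambda(n-m)\, p_m x p_n .
\end{equation}
Thus $S$ depends on the sequence $\left(e_k\right)$ only through the coarser family $\left(p_m\right)$, and it is precisely a Schur-type multiplier on $\Z\times\Z$ whose symbol $(m,n)\mapsto\lambda(n-m)$ is of \emph{Toeplitz} (convolution) type. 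The point of this reduction is that convolution operators of this kind are modelled on the group $\Z$, hence on its dual $\mathbb{T}$: if one embeds the situation into the crossed product $\aM\rtimes\Z$ or, more elementarily, tensors with $L^\infty(\mathbb{T})$ and uses the shift, then $S$ becomes the Fourier multiplier on $L^\alpha(\mathbb{T}; L^\alpha(\aM))$ with symbol the sequence $\lambda$ extended to a function on $\mathbb{T}$ by $\Lambda(\theta)=\sum_n \lambda(n) e^{in\theta}$ (interpreted distributionally).

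Next I would transfer the hypotheses on $\lambda$ into the Marcinkiewicz condition. The scalar Marcinkiewicz multiplier theorem on $\mathbb{T}$ requires that $\sup_n|\lambda(n)|<\infty$ and that the total variation of $\lambda$ over each dyadic block $2^k\leq |n| \leq 2^{k+1}$ be uniformly bounded --- which is exactly what is assumed (both bounds being $\leq 1$). The only extra ingredient needed to pass from the scalar theorem to the $L^\alpha(\aM)$-valued statement \eqref{eq:Srewrite} is that the underlying Banach space $L^\alpha(\aM)$, $1<\alpha<\infty$, is a UMD space; this is classical (it follows from the boundedness of the triangular truncation / noncommitative Hilbert transform already quoted in the excerpt, or directly from the literature on noncommutative $L^p$-spaces). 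With UMD in hand, the vector-valued Marcinkiewicz multiplier theorem (in the form due to Bourgain, or via $R$-boundedness à la Weis) applies verbatim and yields a bound $\|S\|_{L^\alpha\to L^\alpha}\leq c_\alpha$, with $c_\alpha$ depending only on $\alpha$ through the UMD constant of $L^\alpha(\aM)$ and the Marcinkiewicz constants --- in particular independent of $f$ and of the family of projections.

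The main obstacle I anticipate is making the transference rigorous at the level of the von Neumann algebra rather than for scalar- or Hilbert-space-valued functions: one must check that the regrouping \eqref{eq:Srewrite} is trace-preserving in the right sense, that the Toeplitz symbol genuinely corresponds to a Fourier multiplier after tensoring with $L^\infty(\mathbb{T})$ (a Fubini/averaging argument, using that conjugation by the unitary $\theta\mapsto \mathrm{diag}(e^{im\theta})$ implements a trace-preserving $*$-automorphism), and that the resulting operator is contractively dominated by the genuine periodic Fourier multiplier. This is standard ``Coifman--Weiss transference'' but requires care in the unbounded / semifinite setting; everything else (UMD-ness of $L^\alpha(\aM)$, the vector-valued Marcinkiewicz theorem) is off the shelf. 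A secondary technical point is that $\Lambda$ need not be an $L^\infty(\mathbb{T})$ function, only a multiplier, so one should phrase the Marcinkiewicz hypothesis directly in terms of the sequence $\lambda$ and its dyadic variation, as the statement already does, rather than in terms of a symbol on $\mathbb{T}$.
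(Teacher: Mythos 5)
Your proposal is correct and follows essentially the same route as the paper: the regrouped projections $p_m=\sum_{f(k)=m}e_k$ and the conjugation by the unitary $\theta\mapsto\sum_m e^{im\theta}p_m$ are exactly the paper's embedding $x\mapsto h_x(t)=u_t^*xu_t$ with $u_t=\sum_k e^{2\pi i f(k)t}e_k$, after which both arguments conclude by the UMD property of $L^\alpha$ and Bourgain's vector-valued Marcinkiewicz multiplier theorem. The transference step you flag as the main obstacle is handled in the paper simply by noting that $x\mapsto h_x$ is a complemented isometric embedding of $L^\alpha$ into $L^2([0,1],L^\alpha)$ with $M(h_x)=h_{Sx}$.
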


\begin{proof}[Proof of Theorem~\ref{MarcThm}]
  It was proved in~\cite{Bo1986} that if~$X$ is a Banach space with
  UMD property (see~\cite{PiXu2003} for the relevant definitions) and
  if~$h \in L^2([0, 1], X)$ (= the space of all Bochner square
  integrable functions on~$[0, 1]$ with values in~$X$), then the
  linear operator\footnote{Here~$\left\{\hat h (n) \right\}_{n \in
      \Z}$ is the sequence of Fourier coefficients, i.e., $$ \hat h(n)
    = \int_0^1 h(t) \, e^{- 2 \pi i n t} \, dt,\ \ n \in \Z. $$} $$ M
  h (t) = \sum_{n \in \Z} \lambda(n) \hat h(n)\, e^{2\pi in t},\ \ t
  \in [0, 1] $$ is bounded provided $$ \sup_{n \in \Z} \left|
    \lambda(n) \right| \leq 1 $$ and the total variation of the
  sequence~$\lambda$ over every dyadic interval does not exceed~$1$.

  Recall that~$L^\alpha$ is a Banach space with UMD property for
  every~$1 < \alpha < \infty$ (see e.g.~\cite{PiXu2003}).  Consider
  the function $$ h_x(t) = u_t^* x u_t = \sum_{k, j \in \Z} e^{2 \pi i
    (f(j) - f(k)) t} \, e_k x e_j,\ \ x \in L^\alpha,\ t \in [0,
  1], $$ where the unitary~$u_t$ is defined by $$ u_t = \sum_{k \in
    \Z} e^{2 \pi i f(k) t} e_k. $$ Observe that the $n$-th Fourier
  coefficient of~$h_x$ is 
  \begin{equation}
    \label{HxFC}
    \hat h_x (n) = \sum_{f(j) - f(k) = n} e_k
    x e_j,\ \ n \in \Z. 
  \end{equation}
  Noting that the mapping~$x \in L^\alpha \mapsto h_x \in L^2([0, 1],
  L^\alpha)$ is a complemented isometric embedding of~$L^\alpha$
  into~$L^2([0, 1], L^\alpha)$ and that, from~(\ref{HxFC}) $$ M({h_x})
  = h_{Sx}, $$ we see that the boundedness of~$S$ on~$L^\alpha$, $1 <
  \alpha < \infty$ follows from that of~$M$ on~$L^2([0, 1],
  L^\alpha)$.
\end{proof}

\begin{lemma}
  \label{MarcCorl}
  If~$x \in L^\alpha$ and if~$$ x_s = \sum_{k < j} (f(j) - f(k))^{is}
  \, e_k x e_j,\ \ s \in \Rl, $$ then, for every~$1 < \alpha <
  \infty$, there is a constant~$c_\alpha > 0$ such that $$ \left\| x_s
  \right\|_\alpha \leq \, c_\alpha \, \left( 1 + \left| s \right|
  \right)\, \left\| x \right\|_\alpha, $$ where~$f: \Z \mapsto \Z$ is
  any non-decreasing integral function.
\end{lemma}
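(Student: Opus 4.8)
The plan is to deduce Lemma~\ref{MarcCorl} directly from Theorem~\ref{MarcThm}, the point being that $x \mapsto x_s$ is, up to a scalar of order $1+|s|$, an operator of the form $S$ considered there. Define the sequence $\lambda_s = (\lambda_s(n))_{n\in\Z}$ by $\lambda_s(n) = n^{is}$ for $n\geq 1$ and $\lambda_s(n) = 0$ for $n\leq 0$, $s\in\Rl$. Then $x_s$ will turn out to coincide with $(1+|s|)$ times the image of $x$ under the operator $\tilde S$ associated, via Theorem~\ref{MarcThm}, with the normalised sequence $(1+|s|)^{-1}\lambda_s$.

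First I would record the two quantitative facts needed about $\lambda_s$. Since $\bigl|n^{is}\bigr| = 1$ for $n\geq 1$, one has $\sup_{n\in\Z}|\lambda_s(n)|\leq 1$. For the variation, on a dyadic block $2^m\leq n\leq 2^{m+1}$ with $m\geq 1$ I would use the elementary bound $\bigl|(n+1)^{is} - n^{is}\bigr| \leq |s|\,\bigl(\log(n+1)-\log n\bigr)\leq |s|/n$ together with $\sum_{n=2^m}^{2^{m+1}-1} 1/n \leq 1$ to get total variation at most $|s|$ there; the remaining block $1\leq |n|\leq 2$ is handled directly via $\bigl|2^{is}-1\bigr|\leq |s|\log 2$, and on the negative axis $\lambda_s$ vanishes identically. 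Hence the total variation of $\lambda_s$ over every dyadic interval does not exceed $|s|$.

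Next I would rescale: the sequence $\tilde\lambda_s := (1+|s|)^{-1}\lambda_s$ satisfies $\sup_n|\tilde\lambda_s(n)|\leq 1$ and has total variation at most $|s|/(1+|s|)<1$ over every dyadic interval, so Theorem~\ref{MarcThm} applies and the operator $\tilde S x = \sum_{k,j\in\Z}\tilde\lambda_s(f(j)-f(k))\,e_k x e_j$ is bounded on every $L^\alpha$, $1<\alpha<\infty$, with norm at most $c_\alpha$. It remains to identify $x_s$ with $(1+|s|)\tilde S x$: since $f$ is non-decreasing, $f(j)-f(k)\geq 1$ forces $j>k$, so in the double sum defining $\tilde S x$ only the terms with $k<j$ survive, and on each of those $(f(j)-f(k))^{is} = (1+|s|)\tilde\lambda_s(f(j)-f(k))$ (the pairs $k<j$ with $f(j)=f(k)$ contribute zero on both sides). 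Consequently $x_s = (1+|s|)\tilde S x$ and
$$
\left\| x_s \right\|_\alpha = (1+|s|)\,\bigl\|\tilde S x\bigr\|_\alpha \leq c_\alpha\,(1+|s|)\,\left\| x \right\|_\alpha ,
$$
which is the asserted inequality.

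The only step requiring genuine care is the dyadic total-variation estimate for $n\mapsto n^{is}$: this is precisely where the linear growth in $|s|$ is produced. Everything else — the rescaling, the reduction to the range $k<j$ using monotonicity of $f$, and the appeal to Theorem~\ref{MarcThm} — is routine bookkeeping.
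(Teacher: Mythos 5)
Your proof is correct and follows essentially the same route as the paper: both arguments reduce the lemma to Theorem~\ref{MarcThm} by bounding the total variation of $n \mapsto n^{is}$ over each dyadic block by $|s|$ via $\left| (n+1)^{is} - n^{is} \right| \leq |s|/n$ and $\sum_{2^m \leq n < 2^{m+1}} 1/n \leq 1$. The paper leaves the normalisation by $(1+|s|)^{-1}$ and the identification of $x_s$ with the multiplier operator implicit, whereas you spell them out (including the convention for pairs with $f(j)=f(k)$), which is a harmless and arguably welcome elaboration.
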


\begin{proof}[Proof of Lemma~\ref{MarcCorl}]
  Clearly, the lemma follows from Theorem~\ref{MarcThm} if we
  estimate the total variation of the sequence~$\lambda =
  \left\{n^{is} \right\}_{n > 0}$ over dyadic intervals.  To this end,
  via the fundamental theorem of the calculus, we see that $$ \left|
    n^{is} - (n + 1)^{is} \right| \leq \frac {\left| s \right|}{n},\ \
  n \geq 1 $$ and thus immediately $$ \sum_{2^k \leq n \leq 2^{k + 1}}
  \left| n^{is} - (n + 1)^{is} \right| \leq \, \left| s \right|,\ \ k
  \geq 0. $$ The lemma is proved.
\end{proof}

\begin{lemma}
  \label{DecompositionLemma}
  There is a function~$g: \Rl \mapsto \Cx$ such that $$ \int_{\Rl}
  \left| s \right|^n \left| g(s) \right| \, ds < + \infty,\ \ n \geq
  0 $$ and such that, for every~$\mu, \lambda > 0$ with~$0 \leq \frac
  \lambda \mu \leq 2$, $$ \frac \lambda \mu = \int_{\Rl} g(s)\,
  \lambda^{is} \mu^{-is}\, ds. $$
\end{lemma}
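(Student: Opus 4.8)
The plan is to realize the function $t \mapsto t$ on the interval $[0,2]$ as a superposition of the power functions $t^{is}$, $s\in\Rl$, with a rapidly decreasing density $g$; once this is done, substituting $t=\lambda/\mu$ and writing $t^{is}=\lambda^{is}\mu^{-is}$ gives the asserted identity. The natural device is the Mellin transform: for a suitable auxiliary function $\psi$ on $(0,\infty)$ one has $\psi(t)=\int_\Rl \widehat{\Psi}(s)\,t^{is}\,ds$ where $\Psi(u)=\psi(e^u)$ and $\widehat{\Psi}$ is its Fourier transform, so the decay of $g=\widehat\Psi$ is governed by the smoothness of $\Psi$, i.e.\ of $\psi$ on $(0,\infty)$ together with its behaviour at $0$ and $\infty$.

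First I would pick a $C^\infty$ cutoff $\chi:\Rl\to[0,1]$ that equals $1$ on a neighbourhood of $[0,2]$ (say on $[-1,3]$) and is supported in a slightly larger compact set bounded away from no point we care about; then set $\psi(t)=t\,\chi(t)$ for $t>0$. On $(0,\infty)$ the function $\psi$ is smooth, it agrees with $t\mapsto t$ on $[0,2]$, and after the logarithmic change of variable $u=\log t$ the function $\Psi(u)=e^{u}\chi(e^{u})$ is smooth on all of $\Rl$: near $u\to-\infty$ it behaves like $e^{u}$ (so it, and every derivative, decays exponentially), and near $u\to+\infty$ it vanishes identically because $\chi(e^u)=0$ once $e^u$ leaves the support of $\chi$. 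Hence $\Psi$ is a Schwartz function on $\Rl$ (all derivatives exist and decay faster than any polynomial — indeed exponentially on one side and compactly on the other). Consequently its Fourier transform $g(s):=\widehat\Psi(s)=\frac1{2\pi}\int_\Rl \Psi(u)e^{-ius}\,du$ is again Schwartz, so in particular $\int_\Rl |s|^n|g(s)|\,ds<\infty$ for every $n\ge 0$, which is the required moment bound~(\ref{Gfunction}). By Fourier inversion, $\Psi(u)=\int_\Rl g(s)\,e^{ius}\,ds$, and reverting to $t=e^u$ gives $\psi(t)=\int_\Rl g(s)\,t^{is}\,ds$ for all $t>0$.

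It remains to deduce the stated identity. For $\lambda,\mu>0$ with $0\le\lambda/\mu\le 2$ put $t=\lambda/\mu\in[0,2]$; since $\psi(t)=t$ on $[0,2]$ (for $t>0$; the endpoint $t=0$ is vacuous as we assumed $\lambda,\mu>0$, and in any case both sides vanish there by dominated convergence) we get
\begin{equation*}
\frac{\lambda}{\mu}=t=\psi(t)=\int_\Rl g(s)\,t^{is}\,ds=\int_\Rl g(s)\,\lambda^{is}\,\mu^{-is}\,ds,
\end{equation*}
using $(\lambda/\mu)^{is}=\lambda^{is}\mu^{-is}$. This is exactly the asserted representation. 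The only point requiring care — and the one I would single out as the crux — is the verification that $\Psi(u)=e^u\chi(e^u)$ is genuinely Schwartz, i.e.\ controlling its derivatives as $u\to-\infty$: each derivative $\Psi^{(n)}(u)$ is a finite linear combination of terms $e^{u}\chi^{(j)}(e^u)e^{ju}=e^{(j+1)u}\chi^{(j)}(e^u)$, and since $\chi^{(j)}$ is bounded with compact support, each such term is $O(e^{u})$ as $u\to-\infty$ and compactly supported as $u\to+\infty$; thus all derivatives decay faster than any power of $|u|$, which secures the rapid decay of $g=\widehat\Psi$ and hence~(\ref{Gfunction}). Everything else is a routine application of the Fourier inversion theorem for Schwartz functions.
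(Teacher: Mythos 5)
Your proposal is correct and follows essentially the same route as the paper: the paper likewise builds a smooth function equal to $e^{u}$ for $u\leq \log 2$ and vanishing for large $u$, takes $g$ to be its Fourier transform, deduces the moment bounds from smoothness and decay, and obtains the identity by Fourier inversion with the substitution $t=\log(\lambda/\mu)$. Your cutoff construction $\Psi(u)=e^{u}\chi(e^{u})$ and the verification that it is Schwartz are just a slightly more explicit packaging of the same idea.
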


\begin{proof}[Proof of Lemma~\ref{DecompositionLemma}]
  Let us consider a $C^\infty$-function~$f$ such that (i)~$f \geq 0$,
  (ii)~$f (t) = 0$, if~$t \geq 1 + \log 2$; (iii)~$f(t) = e^{t}$,
  if~$t \leq \log 2$.  Observe that~$f$ and all its derivatives
  are~$L^2$ functions, i.e., $$ \left\| f^{(n)} \right\|_2 < +
  \infty,\ \ n \geq 0. $$ If we now set~$g(s) = \hat f(s)$,
  where~$\hat f$ is the Fourier transform of~$f$, then it is known
  (see~\cite[Lemma~7]{PoSuNGapps}) that $$ \int_\Rl \left| s \right|^n
  \, \left| g (s) \right| \, ds \leq c_0 \, \max \left\{\left\|
      f^{(n)} \right\|_2, \left\| f^{(n+1)} \right\|_2 \right\} < +
  \infty,\ \ n \geq 0. $$ Furthermore, via inverse Fourier transform,
  we also have $$ e^{t} = \int_{\Rl} g(s) \, e^{i t s} \, ds,\ \ t
  \leq 0. $$ and substituting~$t = \log \frac \lambda \mu$ delivers
  the desired relation.  The lemma is completely proved.
\end{proof}

\section{Double operator integrals of divided differences.}
\label{sec:doi-dd}

Here we present the continuous version of
transformation~(\ref{SM-def}) and Theorem~\ref{TheoremOfTheCentury}.
The proof of the continuous case employs
Theorem~\ref{TheoremOfTheCentury} (and
Remark~\ref{TheoremOfTheCenturyRem}) and some approximation procedure.

Let~$\aM$ be a semifinite von Neumann algebra with a normal semifinite
faithful trace~$\tau$ and~$L^\alpha$ be the corresponding $L^p$-space
with respect to the couple~$(\aM, \tau)$, $1 \leq \alpha \leq \infty$.
Let us first introduce a continuous version of
transformation~(\ref{SM-def}).

Let us fix spectral measures~$dE_\lambda, dF_\mu \in \aM$, $\lambda,
\mu \in \Rl$.  Recall that if~$x, y \in L^2$, then the mapping $$
(\lambda, \mu) \mapsto d \nu_{x, y} (\lambda, \mu) := \tau \left( y
  dE_\lambda x dF_\mu \right),\ \ \lambda, \mu \in \Rl $$ is a
complex-valued $\sigma$-additive measure on~$\Rl^2$ with total
variation not exceeding~$\left\| x \right\|_2 \, \left\| y \right\|_2$
(see~\cite{PSW-DOI}).  If now~$\phi: \Rl^2 \mapsto \Cx$ is a bounded
Borel function, then the latter implies that $$ (x, y) \mapsto
\int_{\Rl^2} \phi(\lambda, \mu)\, d \nu_{x, y} (\lambda, \mu),\ \ x, y
\in L^2 $$ is a continuous bilinear form, i.e., there is a bounded
linear operator~$T_\phi$ on~$L^2$ such that 
\begin{equation}
  \label{DOIdefBF}
  \tau \left( y T_\phi
    (x) \right) = \int_{\Rl^2} \phi\, d\nu_{x, y},\ \ x, y \in L^2. 
\end{equation}
The operator~$T_\phi$ is a continuous version of~(\ref{SM-def}), we
shall study in the present section.

We shall be saying that operator~$T_\phi$ is bounded on~$L^\alpha$ for
some~$1 \leq \alpha \leq \infty$, if and only if the operator~$T_\phi$
introduced above admits a bounded extension from~$L^\alpha \cap L^2$
to~$L^\alpha$.  The latter extension, if exists, is unique
(see~\cite{PSW-DOI}).

The following theorem is the main objective of the present section.

\begin{theorem}
  \label{TheoremOfTheCenturyCont}
  If~\/$\left\| f \right\|_{\Lip 1} \leq 1$, then the
  operator~$T_{\phi_f}$ is bounded on every space~$L^\alpha$, $1 <
  \alpha < \infty$, where $$ \phi_f (\lambda, \mu) = \frac {f(\lambda)
    - f(\mu)}{\lambda - \mu},\ \text{if~$\lambda \neq \mu$} \ \
  \text{and}\ \ \phi(\lambda, \lambda) = 0,\ \ \lambda, \mu \in
  \Rl. $$
\end{theorem}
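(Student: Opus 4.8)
The plan is to reduce the continuous statement of Theorem~\ref{TheoremOfTheCenturyCont} to the discrete version, Theorem~\ref{TheoremOfTheCentury}, together with Remark~\ref{TheoremOfTheCenturyRem}, via an approximation of the spectral measures~$dE_\lambda$, $dF_\mu$ by discrete ones. First I would fix~$1<\alpha<\infty$ and, by a standard density argument, it suffices to establish the estimate~$|\tau(y\,T_{\phi_f}(x))|\le c_\alpha\|x\|_\alpha\|y\|_{\alpha'}$ for~$x\in L^\alpha\cap L^2$ and~$y\in L^{\alpha'}\cap L^2$, with~$x,y$ supported (via functional calculus) on a bounded interval~$[-R,R]$ in both spectral variables; this truncation is harmless because the divided difference~$\phi_f$ is a bounded Borel function there and the bilinear form~(\ref{DOIdefBF}) makes sense.

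Next, for~$n\ge 1$ I would partition~$[-R,R]$ into~$N=2Rn$ subintervals~$\Delta_k^{(n)}$ of length~$1/n$ and set~$e_k^{(n)}=E(\Delta_k^{(n)})$, $f_j^{(n)}=F(\Delta_j^{(n)})$; these are finite families of mutually orthogonal projections in~$\aM$. Define the discrete kernel~$\psi_{kj}^{(n)}=\dfrac{f(\xi_k)-f(\xi_j)}{\xi_k-\xi_j}$ for~$k\ne j$ and~$0$ otherwise, where~$\xi_k$ is the left endpoint (or midpoint) of~$\Delta_k^{(n)}$. The key point is that the rescaled function~$\lambda\mapsto f(\lambda/n)$ has the same~$\Lip 1$ norm as~$f$, so after rescaling the integer lattice the discrete operator~$\tilde T_n x=\sum_{k,j}\psi_{kj}^{(n)}\,e_k^{(n)}x\,f_j^{(n)}$ is exactly of the form~(\ref{SM-def}); by Theorem~\ref{TheoremOfTheCentury} and Remark~\ref{TheoremOfTheCenturyRem} it is bounded on~$L^\alpha$ with a constant~$c_\alpha$ \emph{independent of~$n$ and~$R$}. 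Hence~$|\tau(y\,\tilde T_n x)|\le c_\alpha\|x\|_\alpha\|y\|_{\alpha'}$ uniformly in~$n$.

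It then remains to show that~$\tau(y\,\tilde T_n x)\to\tau(y\,T_{\phi_f}(x))$ as~$n\to\infty$. Writing both sides as integrals against the measure~$d\nu_{x,y}$ from~(\ref{DOIdefBF}), we have~$\tau(y\,\tilde T_n x)=\int_{[-R,R]^2}\phi_n\,d\nu_{x,y}$ where~$\phi_n(\lambda,\mu)=\psi_{kj}^{(n)}$ for~$(\lambda,\mu)\in\Delta_k^{(n)}\times\Delta_j^{(n)}$. Since~$\nu_{x,y}$ has finite total variation~$\le\|x\|_2\|y\|_2$, it is enough to check that~$\phi_n\to\phi_f$ pointwise off the diagonal and that~$\sup_n\|\phi_n\|_\infty<\infty$; both follow from~$\|f\|_{\Lip 1}\le1$ (which gives~$|\phi_n|\le1$ everywhere) and the fact that the divided difference~$\phi_f$ is continuous away from the diagonal, so one may invoke dominated convergence after noting that~$\nu_{x,y}$ assigns mass zero to the diagonal whenever~$x,y\in L^2$ (equivalently, the diagonal contribution can be absorbed, since~$\phi_n$ vanishes on the diagonal blocks). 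Passing to the limit yields~$|\tau(y\,T_{\phi_f}(x))|\le c_\alpha\|x\|_\alpha\|y\|_{\alpha'}$, and duality gives the boundedness of~$T_{\phi_f}$ on~$L^\alpha$.

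The main obstacle I anticipate is the convergence step: one must be careful that the diagonal blocks~$\Delta_k^{(n)}\times\Delta_k^{(n)}$, on which~$\phi_n=0$ while~$\phi_f$ is bounded but generally nonzero, do not contribute in the limit. This is where the hypothesis~$x,y\in L^2$ is used, guaranteeing that~$d\nu_{x,y}$ is a genuine~$\sigma$-additive measure of finite variation whose restriction to the diagonal~$\{\lambda=\mu\}$ vanishes (the diagonal has~$\nu_{x,y}$-measure zero because it is a countable decreasing intersection of the vanishing off-diagonal complement, or more directly because for nonatomic spectral measures the diagonal strip shrinks to a null set); once this is secured, dominated convergence with dominating constant~$1$ finishes the argument cleanly, and the uniform bound from Section~\ref{sec:discrete} carries through to the limit.
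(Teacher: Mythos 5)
Your argument is correct, but it takes a genuinely different (and in fact shorter) route than the paper. The paper runs a two-stage approximation: it first mollifies $f$ by Gaussians ($f_n=G_n*f$, Lemma~\ref{LiApprox}) so that $\phi_{f_n}$ admits the Fourier--Duhamel representation~(\ref{TensorRep}), and only then discretizes the spectral measures; the point of the smoothing is that it lets one prove \emph{norm} convergence $\|T_{\phi_n}(x)-T_{n,m}(x)\|_\alpha\to 0$ via Lemma~\ref{ExpEst} and the integrability of $s\hat f_n(s)$. You instead discretize directly and only prove convergence of the bilinear forms $\tau(y\,\tilde T_n x)\to\tau(y\,T_{\phi_f}(x))$, which is all the theorem needs: since $\tau(y\,\tilde T_n x)=\int\phi_n\,d\nu_{x,y}$ with $|\phi_n|\le 1$ and $|\nu_{x,y}|$ finite, and since $\phi_n\to\phi_f$ pointwise (off the diagonal by continuity of $f$ and because any fixed off-diagonal point eventually lies in an off-diagonal block; on the diagonal both kernels are $0$ by definition), dominated convergence applies. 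This bypasses Lemmas~\ref{ExpEst} and~\ref{LiApprox}, the reduction to finitely supported $f$, and the representation~(\ref{TensorRep}) entirely; the paper's extra machinery buys the stronger statement that the discretized operators converge to $T_{\phi_{f_n}}$ in norm, which is not needed here. Two small repairs: the rescaled function with the same Lipschitz constant is $t\mapsto nf(\xi_0+t/n)$, not $t\mapsto f(t/n)$ (your formula for $\psi^{(n)}_{kj}$ already implicitly carries the factor $n$, so this is only a slip in the prose); and your claim that the diagonal is a $\nu_{x,y}$-null set is false in general (spectral measures may have atoms and $\int dE_\lambda\,x\,dF_\lambda$ need not vanish), but it is also unnecessary, since $\phi_n$ and $\phi_f$ both vanish there and the pointwise convergence holds at every point, so the dominated convergence argument is unaffected.
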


We need the following auxiliary lemma first.

\begin{lemma}[{Duhamel's formula, \cite{Widom1984}}]
  \label{ExpEst}
  Let~$A$ and~$B$ be self-adjoint linear operators.  If~\/$A - B$ is
  bounded, then $$ e^{irA} - e^{irB} = ir \int_0^1 e^{ir (1 - t)A} \,
  \left( A - B \right)\, e^{irtB}\, dt,\ \ r \in \Rl. $$ In
  particular, $$ \left\| e^{irA} - e^{irB} \right\| \leq \left| r
  \right|\, \left\| A - B \right\|. $$
\end{lemma}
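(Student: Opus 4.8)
The plan is to interpolate between the two exponentials along a path and then differentiate. I would set $\Phi(t) = e^{ir(1-t)A}\, e^{irtB}$ for $t \in [0,1]$, so that $\Phi(0) = e^{irA}$ and $\Phi(1) = e^{irB}$. The stated identity then follows from the fundamental theorem of calculus once I establish that $\Phi$ is strongly differentiable with $\Phi'(t) = -ir\, e^{ir(1-t)A}(A-B)e^{irtB}$, since in that case $e^{irB} - e^{irA} = \Phi(1) - \Phi(0) = \int_0^1 \Phi'(t)\,dt$, which upon rearrangement is exactly Duhamel's formula.

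First I would record the structural fact that makes everything work: since $C := A - B$ is bounded and everywhere defined, the self-adjoint operators $A = B + C$ and $B$ share a common domain $D := \mathrm{Dom}(A) = \mathrm{Dom}(B)$. Although $A$ and $B$ are individually unbounded, they will enter the final integrand only through the \emph{bounded} combination $A - B$, and this is precisely what keeps the computation legitimate.

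Next I would carry out the differentiation against a fixed vector $\xi \in D$, writing $\Psi(t) = \Phi(t)\xi$. By Stone's theorem the unitary group $e^{irtB}$ leaves $D = \mathrm{Dom}(B)$ invariant, so $t \mapsto e^{irtB}\xi$ is strongly differentiable with derivative $irB\,e^{irtB}\xi$, and the vector $e^{irtB}\xi$ lies in $D = \mathrm{Dom}(A)$, which licenses differentiating the outer factor as well. The product rule then yields $\Psi'(t) = -ir\,A\,e^{ir(1-t)A}e^{irtB}\xi + ir\,e^{ir(1-t)A}B\,e^{irtB}\xi$. The key move is to commute $A$ through its own unitary group, $A\,e^{ir(1-t)A}\eta = e^{ir(1-t)A}A\,\eta$ for $\eta = e^{irtB}\xi \in \mathrm{Dom}(A)$, which collapses the two terms into $\Psi'(t) = -ir\,e^{ir(1-t)A}(A-B)e^{irtB}\xi$, whose norm is at most $|r|\,\|A-B\|\,\|\xi\|$ uniformly in $t$.

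Finally I would integrate and pass to the operator level. For each $\xi \in D$ the fundamental theorem of calculus gives $(e^{irA}-e^{irB})\xi = ir\int_0^1 e^{ir(1-t)A}(A-B)e^{irtB}\xi\,dt$; since the integrand is a norm-continuous family of bounded operators, the integral is a genuine Bochner integral defining a bounded operator, and both sides are bounded in $\xi$, so the identity extends by density from $D$ to the whole Hilbert space. The norm bound is then immediate, as each exponential is unitary and hence of norm one, giving $\|e^{irA}-e^{irB}\| \le |r|\int_0^1 \|A-B\|\,dt = |r|\,\|A-B\|$. The main obstacle is the honest justification of the differentiation and of the commutation $A\,e^{ir(1-t)A} = e^{ir(1-t)A}A$ in the unbounded setting; the resolution is exactly the observation that $A$ and $B$ share the domain $D$ and appear only through the bounded operator $A-B$, so no unbounded quantity survives in the integrand.
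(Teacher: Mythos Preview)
The paper does not actually supply a proof of this lemma; it is stated with a citation to Widom~\cite{Widom1984} and used as a black box in the proof of Theorem~\ref{TheoremOfTheCenturyCont}. Your argument is the standard derivation of Duhamel's formula---interpolate via $\Phi(t)=e^{ir(1-t)A}e^{irtB}$, differentiate on the common core $D=\mathrm{Dom}(A)=\mathrm{Dom}(B)$, and integrate---and it is correct. The technical points you flag (common domain from boundedness of $A-B$, Stone's theorem for strong differentiability and domain invariance, commutation of $A$ with its own unitary group on $\mathrm{Dom}(A)$) are exactly the ones needed to make the formal computation rigorous in the unbounded setting, and the norm estimate follows as you say.
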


% \begin{proof}[Proof of Lemma~\ref{ExpEst}]
%   Let~$T_\phi$ be the double operator integral operator associated
%   with the spectral measures of operators~$A$ and~$B$ and the
%   function $$ \phi(\lambda, \mu) = \frac {e^{ir\lambda} - e^{ir
%       \mu}}{\lambda - \mu},\ \ \lambda \neq \mu,\ \ \phi(\lambda,
%   \lambda) = ir e^{ir\lambda},\ \ \lambda, \mu \in \Rl. $$ The
%   function~$\phi$ has the representation $$ \phi(\lambda, \mu) = ir
%   \int_0^1 e^{ir \left( (1 - t) \lambda + t \mu \right)}\, dt. $$
%   Consequently, directly checking~(\ref{DOIdefBF}), the reader may see
%   that the double operator integral~$T_\phi$ associated with the
%   spectral measures of the operators~$A$ and~$B$ has the form $$
%   T_\phi (x) = \int_0^1 e^{ir(1-t)A} x e^{ir t B}\, dt. $$ In
%   particular, it is bounded on every~$L^\alpha$, $1 \leq \alpha \leq
%   \infty$.  To finish the proof of the lemma, it is sufficient now to
%   recall that if~$A - B$ is bounded, then $$ e^{irA} - e^{irB} =
%   T_\phi(A - B) $$ (see, e.g.,~\cite[Theorem~15]{CaPoSu}).
% \end{proof}

\begin{proof}[Proof of Theorem~\ref{TheoremOfTheCenturyCont}]
  We observe that function~$f$ may be taken finitely supported.
  Indeed, Let Theorem~\ref{TheoremOfTheCenturyCont} be proved for
  finitely supported functions.  Fix~$f: \Rl \mapsto \Cx$ Lipschitz,
  take a sequence of indicators~$\chi_n = \chi_{[-n, n]}$ and set $$
  \phi_n (\lambda, \mu) = \chi_n (\lambda)\, \phi_f \left( \lambda,
    \mu \right) \, \chi_n (\mu). $$ Since
  Theorem~\ref{TheoremOfTheCenturyCont} is proved for every finitely
  supported Lipschitz function, the sequence of operators~$T_{\phi_n}$
  is uniformly bounded, we also have that~$\lim_{n \rightarrow \infty}
  \phi_n = \phi_f$ pointwise.  This implies that~$T_{\phi_f}$ is also
  bounded (see~\cite[Lemma~2.6]{PSW-DOI}).

  Let us next assume that~$x \in L^2$ is {\it diagonal\/} with
  respect to the measures~$dE_\lambda$ and~$dF_\mu$, i.e., $$
  dE_\lambda x = x dF_\lambda,\ \lambda \in \Rl \ \
  \Longleftrightarrow\ \ x = \int_{\Rl} dE_\lambda x dF_\lambda. $$ In
  this case, $$ d \nu_{x, y} (\lambda, \mu) = \delta(\lambda - \mu)\,
  \tau \left( dE_\lambda x dF_\mu y \right), \ \ y \in L^2, $$
  where~$\delta$ is the Dirac function and therefore
  from~(\ref{DOIdefBF})
  \begin{multline*}
    \left| \tau(y T_\phi (x)) \right| = \left| \tau \left[ y
        \int_{\Rl} \phi(\lambda, \lambda)\, dE_\lambda x dF_\lambda
      \right] \right| \\ \leq \left\| y \right\|_{\alpha'}\, \left\|
      \int_\Rl \phi(\lambda, \lambda)\, dE_\lambda x dF_\lambda
    \right\|_\alpha \\ \leq \, \left\| y\right\|_{\alpha'} \, \left\|
      \phi \right\|_\infty \left\| \int_{\Rl} dE_\lambda x dF_\lambda
    \right\|_\alpha = \left\| \phi \right\|_\infty\, \left\| y
    \right\|_{\alpha'}\, \left\| x \right\|_\alpha.
  \end{multline*}
  for every $$ x \in L^2 \cap L^\alpha,\ \ y \in L^2 \cap L^{\alpha'}
  \ \ \text{and}\ \ \text{$x$ is diagonal}. $$ In other words, the
  operator~$T_\phi$ is trivially bounded on the diagonal part
  of~$L^\alpha$ for every bounded Borel function~$\phi$.  This
  indicates that the essential part of the proof is boundedness on the
  off-diagonal part of~$L^\alpha$, i.e., for those~$x \in L^\alpha$
  such that $$ dE_\lambda x dF_\lambda = 0,\ \lambda \in \Rl\ \
  \Longleftrightarrow\ \ \int_{\Rl} dE_\lambda x dF_\lambda = 0. $$
  Note also that if~$x = A - B$, then trivially $$ dE^A_\lambda x
  dE^B_\lambda = 0,\ \ \lambda \in \Rl. $$
  
  From this moment on we assume that~$x \in L^2$ is off-diagonal with
  respect to the the measures~$dE_\lambda$ and~$dF_\mu$.  The proof is
  a two stage approximation.  Let~$\left\{G_n\right\}_{n = 1}^\infty$
  be the family of dilated Gaussian functions as in
  Lemma~\ref{LiApprox}.  If now~$f_n = G_n * f$, then $$
  \phi_n(\lambda, \mu) := \phi_{f_n} (\lambda, \mu) = \int_{\Rl}
  G_n(s)\, \phi_f(\lambda - s, \mu - s)\, ds,\ \ \lambda \neq \mu.  $$
  In other words,~$\phi_n$ is convolution (with respect to the
  diagonal shift on~$\Rl^2$) of~$G_n$ and~$\phi$.  From
  Lemma~\ref{LiApprox}, we see that $$ \lim_{n \rightarrow \infty}
  \int_{\Rl^2} \left| \phi_n - \phi_f \right|\, d \nu_{x, y} = 0,\ \
  x, y \in L^2. $$ This implies that
  \begin{equation}
    \label{TheoremOfTheCenturyContTmp}
    \lim_{n \rightarrow \infty}
    \tau \left( y T_{\phi_n} (x) \right) = \tau \left( y T_{\phi_f} (x)
    \right),\ \ x, y \in L^2. 
  \end{equation}
  Consequently, in order to see that~$T_{\phi_f}$ is bounded
  on~$L^\alpha$, we have to prove that the operators~$T_{\phi_n}$ are
  all bounded on~$L^\alpha$, uniformly with respect to~$n = 1, 2,
  \ldots\,$.
  
  Observe that~$f_n$ is a rapidly decreasing $C^\infty$-function for
  every~$n = 1, 2, \ldots\,$.  Thus, if~$\hat f_n$ is the Fourier
  transform of~$f_n$, then 
  \begin{equation}
    \label{SummabilityA}
    c_{n, m} := \int_{\Rl} \left|s^m \hat f_n (s)
    \right|\, ds < + \infty,\ \ m = 0, 1, \ldots,\ n = 1, 2,
    \ldots\,. 
  \end{equation}
  Furthermore, if~$h_n(s)$ is the Fourier transform of
  derivative~$f'_n$, i.e, $h_n(s) = s \hat f(s)$, then~$h_n$ is
  integrable and
  \begin{multline}
    \label{PHInDEF}
    \phi_n (\lambda, \mu) = \phi_{f_n} (\lambda, \mu) = \frac {f_n
      (\lambda) - f_n(\mu)}{\lambda - \mu} = \int_0^1 f'_n \left( (1 -
      t) \lambda + t \mu\right)\, dt \\ = \int_\Rl h_n(s)\, ds
    \int_0^1 e^{is \left( (1 - t) \lambda + t \mu \right)}\, dt,\ \
    \lambda \neq \mu.
  \end{multline}
  Directly checking~(\ref{DOIdefBF}) again, the latter implies that
  the double operator integral operator~$T_{\phi_n}$ (with respect to
  any spectral measures~$dE_\lambda, dF_\mu \in \aM$, $\lambda, \mu
  \in \Rl$) has the form
  \begin{multline}
    \label{TensorRep}
    T_{\phi_n} (x) = \int_{\Rl} h_n(s)\, ds \int_0^1 e^{is (1 - t) A}
    x e^{is t B}\, dt, \\ \text{where}\ \ A = \int_{\Rl} \lambda \,
    dE_\lambda \ \ \text{and}\ \ B = \int_{\Rl} \mu \, dF_\mu \\
    \text{and~$x$ is off-diagonal.}
  \end{multline}
  Let us now fix the spectral measures~$dE_\lambda$ and~$dF_\mu$ and
  introduce the following discrete approximation~$dE_{m, \lambda}$
  and~$dF_{m, \mu}$, $\lambda, \mu \in \Rl$, $m = 1, 2, \ldots$ 
  \begin{multline*}
    E_{m} (\Omega) = \sum_{j \in \Z:\ \frac jm \in \Omega} e_j \ \
    \text{and}\ \ F_{m} (\Omega) = \sum_{k \in \Z:\ \frac km \in
      \Omega} f_k, \ \
    \text{where} \\
    e_j = E \left[\frac jm, \frac {j + 1}m \right) \ \ \text{and}\ \
    f_k = F \left[\frac km, \frac {k + 1}m \right),\\ j, k \in \Z,\ \
    \Omega \subseteq \Rl,\ \text{$\Omega$ is Borel}.
  \end{multline*}
  Let also~$T_{n, m}$ be the double operator integral operator
  associated with the function~$\phi_n$ and the spectral
  measures~$dE_{m, \lambda}$, $dF_{m, \mu}$.  On one hand then, the
  operator~$T_{n, m}$ has the form $$ T_{n, m} (x) = \sum_{j, k \in
    \Z} \phi_n\left( \frac jm, \frac km \right) e_j x f_k = \sum_{j, k
    \in \Z} \frac {mf_n \left( \frac jm \right) - m f_n \left( \frac
      km\right) }{j - k}\, e_j x f_k, $$ which is operator given
  in~(\ref{SM-def}) with respect to the function~$f_{n, m} (t) =m f_n
  (m^{-1} t)$.  Since~$f_{n, m}$ is Lipschitz and $$ \left\| f_{n, m}
  \right\|_{\Lip 1} = \left\| f_n \right\|_{\Lip 1} = \left\| G_n * f
  \right\|_{\Lip 1} \leq 1, $$ it follows from
  Theorem~\ref{TheoremOfTheCentury} that the family of
  operators~$T_{m,n}$ is bounded on~$L^\alpha$ uniformly across~$m, n
  = 1, 2, \ldots\,$.  On the other hand, let us set $$ A_m =
  \int_{\Rl} \lambda\, dE_{m, \lambda} \ \ \text{and}\ \ B_m =
  \int_\Rl \mu \, dF_{m, \mu}. $$ It follows immediately from the
  spectral theorem that $$ \left\| A - A_m \right\| \leq \frac 1m \ \
  \text{and}\ \ \left\| B - B_m \right\| \leq \frac 1m. $$
  Furthermore, observing the elementary representation
  \begin{multline*}
    e^{is (1 - t)A} x e^{is t B} - e^{is (1 - t) A_m} x e^{is B_m} =
    e^{is (1 - t) A} x \left( e^{is B} - e^{is B_m} \right) \\ +
    \left( e^{is (1 - t) A} - e^{is (1 - t) A_m} \right) x \, e^{is
      tB_m},
  \end{multline*}
  we now estimate, using Lemma~\ref{ExpEst}, $$ \left\| e^{is (1 - t)
      A} x e^{ist B} - e^{is (1 - t)A_m} x e^{-ist B_m}
  \right\|_\alpha \leq \frac {2\left| s \right|}{m}\, \left\| x
  \right\|_\alpha. $$ Employing~(\ref{TensorRep}) for
  operators~$T_{\phi_n}$ and~$T_{n, m}$, we have $$ T_{\phi_n} (x) -
  T_{n, m} (x) = \int_{\Rl} h_n (s) \, ds \int_0^1 \left[ e^{is(1-t)A}
    x e^{ist B} - e^{is (1-t)A_m} x e^{istB_m} \right]\, dt. $$ Using
  the estimate above for the integrand, we finally see $$ \left\|
    T_{\phi_n} (x) - T_{n, m} (x) \right\|_\alpha \leq \frac 2m\,
  \left\| x\right\|_\alpha \, \int_\Rl \left| s h_n (s)\right|\, ds
  = \frac {2 c_{n, 2}}m \, \left\| x \right\|_\alpha, $$ since
  function~$h_n(s) = s\hat f(s)$ and~(\ref{SummabilityA}).  In
  particular, we have $$ \lim_{m \rightarrow \infty} \left\|
    T_{\phi_n} (x) - T_{n, m} (x) \right\|_\alpha = 0. $$ We have
  already observed above that the family of operators
  operators~$T_{n,m}$ is uniformly bounded on~$L^\alpha$.  Together
  with the limit above, it means that the family~$T_{\phi_n}$ is also
  uniformly bounded on~$L^\alpha$.
  Recalling~(\ref{TheoremOfTheCenturyContTmp}) finally yields
  that~$T_{\phi_f}$ is bounded on~$L^\alpha$.  The theorem is proved.
\end{proof}

\begin{lemma}
  \label{LiApprox}
  Let~$d\nu$ be a finite Borel measure on~$\Rl^2$ and let~$\phi \in
  \Rl^2 \mapsto \Cx$ an integrable function on~$\left( \Rl^2, d\nu
  \right)$, i.e., $\phi \in L^1(\Rl^2, d\nu)$.  If~$G_n$ is the family
  of dilated Gaussian functions, i.e., $$ G_n(t) = nG(nt),\ \ G(t) =
  (2\pi)^{-\frac 12}\, e^{- \frac {t^2}2 },\ \ t \in \Rl,\ n = 1, 2,
  \ldots $$ and if $$ \phi_n (\lambda, \mu) = \int_{\Rl} G_n(s) \phi
  (\lambda - s, \mu - s)\, ds, $$ then
  \begin{equation}
    \label{LiApproxObj}
    \lim_{n \rightarrow \infty} \int_{\Rl^2} \left| \phi_n - \phi
    \right|\, d\nu = 0. 
  \end{equation}
\end{lemma}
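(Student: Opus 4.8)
The plan is to reduce \eqref{LiApproxObj} to the standard fact that convolution with an approximate identity converges in $L^1$, but carried out with respect to the measure $d\nu$ rather than Lebesgue measure. First I would observe that the map $(\lambda,\mu)\mapsto(\lambda-s,\mu-s)$ is the diagonal translation $\tau_s$ on $\Rl^2$, so that $\phi_n=\int_\Rl G_n(s)\,(\phi\circ\tau_s)\,ds$ and hence, pointwise $d\nu$-a.e.,
\begin{equation*}
  |\phi_n(\lambda,\mu)-\phi(\lambda,\mu)|
  \le \int_\Rl G_n(s)\,\bigl|\phi(\lambda-s,\mu-s)-\phi(\lambda,\mu)\bigr|\,ds,
\end{equation*}
using $\int_\Rl G_n=1$ and $G_n\ge 0$. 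Integrating against $d\nu$ and applying Tonelli's theorem, it suffices to show that
\begin{equation*}
  \int_\Rl G_n(s)\,\Phi(s)\,ds\longrightarrow 0,\qquad
  \Phi(s):=\int_{\Rl^2}\bigl|\phi\circ\tau_s-\phi\bigr|\,d\nu .
\end{equation*}
Since $G_n$ is an approximate identity concentrating at $0$, this will follow once I know that $\Phi$ is bounded on a neighbourhood of $0$ and that $\Phi(s)\to 0$ as $s\to 0$; indeed a standard splitting $\int_{|s|\le\delta}+\int_{|s|>\delta}$, together with $\int_{|s|>\delta}G_n\to 0$ and $\sup_n\int_\Rl G_n<\infty$, then closes the argument. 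Boundedness of $\Phi$ is immediate: $\Phi(s)\le 2\|\phi\|_{L^1(d\nu)}<\infty$ for all $s$.

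The one genuine point is the continuity $\Phi(s)\to 0$ as $s\to 0$, i.e.\ strong continuity of diagonal translation on $L^1(\Rl^2,d\nu)$ at the particular function $\phi$. This is the step that requires a little care because $d\nu$ need not be translation-invariant, so the classical proof (density of $C_c$, for which translation continuity is uniform, plus the isometry of translation on $L^1$) does not transfer verbatim. I would handle it as follows. By inner regularity of the finite Borel measure $d\nu$ together with Lusin's theorem, given $\varepsilon>0$ choose a compact $K\subseteq\Rl^2$ with $\nu(\Rl^2\setminus K)<\varepsilon$ and a function $\psi\in C_c(\Rl^2)$ with $\psi=\phi$ on $K$ and $\|\psi\|_\infty\le\|\phi\|_\infty$ when $\phi$ is bounded (in general first truncate $\phi$ to $\phi\wedge N$, absorbing the tail into $\varepsilon$ via dominated convergence). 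Then estimate
\begin{equation*}
  \int_{\Rl^2}|\phi\circ\tau_s-\phi|\,d\nu
  \le \int_{\Rl^2}|\psi\circ\tau_s-\psi|\,d\nu
     + \int_{\Rl^2}|\phi\circ\tau_s-\psi\circ\tau_s|\,d\nu
     + \int_{\Rl^2}|\phi-\psi|\,d\nu .
\end{equation*}
The last term is at most $2N\varepsilon$-ish by the choice of $\psi$; the middle term equals $\int_{\Rl^2}|\phi-\psi|\circ\tau_s\,d\nu$ and is controlled by $\nu\bigl(\tau_{-s}(\{\phi\neq\psi\})\bigr)\cdot 2N$, which is small for $|s|$ small because $\{\phi\neq\psi\}$ is (up to a $\nu$-null set) contained in a small-measure set and $\nu$ is a finite measure with no atoms on lines — more robustly, one uses that $\nu(\tau_{-s}E)\to\nu(E)$ as $s\to 0$ for fixed Borel $E$, a consequence of dominated convergence applied to $\chi_{\tau_{-s}E}$ together with the fact that $\partial E$ meets only $\nu$-measure-zero many translates, or simply absorbs this term by enlarging $K$ slightly to a set stable under small translations. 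The first term tends to $0$ as $s\to 0$ because $\psi$ is uniformly continuous with compact support: $|\psi\circ\tau_s-\psi|\to 0$ uniformly, hence its $d\nu$-integral $\to 0$ since $\nu$ is finite.

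I expect the main obstacle to be precisely this last paragraph: making the translation-continuity estimate clean without translation invariance of $d\nu$. The cleanest route, and the one I would actually write, is to avoid Lusin entirely and instead invoke that finite Borel measures on $\Rl^2$ are Radon, so $C_c(\Rl^2)$ is dense in $L^1(\Rl^2,d\nu)$; pick $\psi\in C_c$ with $\|\phi-\psi\|_{L^1(d\nu)}<\varepsilon$, note $\int|\psi\circ\tau_s-\psi|\,d\nu\le \nu(\Rl^2)\,\|\psi\circ\tau_s-\psi\|_\infty\to 0$, and bound $\int|\phi\circ\tau_s-\psi\circ\tau_s|\,d\nu=\int|\phi-\psi|\,d(\tau_s)_*\nu$, where $(\tau_s)_*\nu\to\nu$ in total variation is false in general — so here one does need a separate argument, and the honest fix is to enlarge: since $\psi$ has compact support $L$, one only ever integrates over $\tau_{-s}L$ for $|s|\le 1$, all contained in a fixed compact $L'$; approximate $\phi$ in $L^1(L',d\nu)$ by a continuous function, for which translation continuity over the finite measure $d\nu$ is again just uniform continuity times $\nu(L')$. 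Assembling these three small terms gives $\limsup_{s\to 0}\Phi(s)\le C\varepsilon$ for arbitrary $\varepsilon>0$, hence $\Phi(s)\to 0$, and the approximate-identity argument above completes the proof of \eqref{LiApproxObj}.
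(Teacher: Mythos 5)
Your reduction is fine up to the point where all the weight is placed on the function $\Phi(s)=\int_{\Rl^2}\left|\phi(\lambda-s,\mu-s)-\phi(\lambda,\mu)\right|\,d\nu(\lambda,\mu)$: the pointwise bound, Tonelli, and the approximate-identity splitting are correct \emph{provided} $\Phi$ is bounded near $0$ and $\Phi(s)\to 0$ as $s\to 0$. That is exactly where the argument breaks, and you have correctly sensed the danger, but none of your proposed repairs work. The quantity $\int_{\Rl^2}\left|\phi(\lambda-s,\mu-s)\right|\,d\nu(\lambda,\mu)$ is the integral of $\left|\phi\right|$ against the translated measure $\nu(\,\cdot+(s,s))$, which a general finite Borel measure does not control; so already the claim $\Phi(s)\le 2\left\|\phi\right\|_{L^1(d\nu)}$ is false. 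Worse, the diagonal translation is not strongly continuous on $L^1(\Rl^2,d\nu)$: take $\nu=\delta_{(0,0)}$ and $\phi=\chi_{\{(0,0)\}}$. Then $\phi\in L^1(d\nu)$ and $\Phi(s)=1$ for every $s\ne 0$; moreover $\phi_n\equiv 0$ (the integrand vanishes off a Lebesgue-null set of $s$), so $\int_{\Rl^2}\left|\phi_n-\phi\right|\,d\nu=1$ for all $n$. This example also refutes your intermediate claim that $\nu(E+(s,s))\to\nu(E)$ as $s\to 0$ for a fixed Borel set $E$, and it shows that localizing to a compact set $L'$ cannot help, since the obstruction is entirely local. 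In other words, for a bare $\phi\in L^1(\Rl^2,d\nu)$ the conclusion \eqref{LiApproxObj} is simply not available by translation-continuity (nor at all, by the same example); some regularity of $\phi$ relative to $\nu$ must enter.

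The paper's proof takes a different route that never translates the measure: it first reduces, by density of uniformly continuous functions in $L^1(\Rl^2,d\nu)$, to the case where $\phi$ is uniformly continuous and bounded, and for such $\phi$ it proves the much stronger uniform convergence $\left\|\phi_n-\phi\right\|_\infty\to 0$ by the standard splitting of $\int_\Rl G_n(s)\,ds$ into $\left|s\right|<\delta$ and $\left|s\right|\ge\delta$, using uniform continuity on the first piece and $\int_{\left|s\right|\ge\delta}G_n\to 0$ on the second; finiteness of $\nu$ then yields \eqref{LiApproxObj}. The real content is thus the sup-norm estimate for continuous $\phi$, which costs nothing, while the delicate point is the initial density reduction (which, as your analysis and the example above show, cannot be justified for arbitrary $L^1(d\nu)$ functions; in the paper's application $\phi=\phi_f$ is a bounded Borel function continuous $\nu_{x,y}$-almost everywhere, which is the regime in which the lemma is actually used). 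You should abandon the attempt to prove $\Phi(s)\to 0$ and instead run the sup-norm argument on a continuous approximant, being explicit about the extra hypotheses on $\phi$ that make the approximation step legitimate.
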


\begin{proof}[Proof of Lemma~\ref{LiApprox}]
  Since the class of uniformly continuous functions on~$\Rl^2$ is norm
  dense in~$L^1(\Rl^2, d \nu)$, we may assume that~$\phi$ is uniformly
  continuous on~$\Rl^2$.  For continuous~$\phi$,
  limit~(\ref{LiApproxObj}) can proved by the standard approximation
  identity argument.  Indeed, we shall actually show the uniform
  convergence $$ \lim_{n \rightarrow \infty} \left\| \phi_n -
    \phi\right\|_\infty = 0, $$ which, since~$d \nu$ is finite,
  implies~(\ref{LiApproxObj}).

  Observe first that $$ \int_{\Rl} G_n (s)\, ds = 1,\ \ n = 1, 2,
  \ldots\,. $$ Consequently, we have
  \begin{multline*}
    \phi_n (\lambda, \mu) - \phi(\lambda, \mu) = \int_\Rl G_n (s) \,
    \phi(\lambda - s, \mu - s)\, ds - \phi(\lambda, \mu)\, \int_\Rl
    G_n (s)\, ds \\ = \int_\Rl \Phi_n (\lambda, \mu, s)\, ds,\\
    \text{where}\ \ \Phi_n(\lambda, \mu, s) = G_n (s)\, \left(
      \phi(\lambda - s, \mu - s) - \phi(\lambda, \mu) \right).
  \end{multline*}
  Fix~$\epsilon > 0$.  Recall that~$\phi$ is uniformly continuous
  which means that there is~$\delta > 0$ such that $$ \sup_{\lambda,
    \mu \in \Rl} \left| \phi(\lambda - s, \mu - s) - \phi(\lambda,
    \mu) \right| \leq \epsilon,\ \ \text{if~$\left| s \right| <
    \delta$.} $$ Using the latter~$\delta > 0$, we split 
  \begin{multline*}
    \phi_n(\lambda, \mu) - \phi(\lambda, \mu) = \omega_0(\lambda, \mu) +
    \omega_\infty (\lambda, \mu),\ \ \text{where} \\ \omega_0 (\lambda,
    \mu) = \int_{\left| s \right| < \delta} \Phi_n (\lambda, \mu, s)\,
    ds \ \ \text{and}\ \ \omega_\infty (\lambda, \mu) = \int_{\left| s
      \right| \geq \delta} \Phi_n (\lambda, \mu, s)\, ds. 
  \end{multline*}
  We estimate the latter two terms separately.  For~$\omega_0$, using
  the uniform estimate above, 
  \begin{multline*}
    \sup_{\lambda, \mu \in \Rl} \left| \omega_0 (\lambda, \mu) \right|
    \leq \int_{\left| s \right| < \delta} G_n (s)\, \sup_{\lambda, \mu
      \in \Rl} \left| \phi(\lambda - s, \mu - s) - \phi(\lambda, \mu)
    \right|\, ds \\ \leq \epsilon \, \int_{\Rl} G_n (s)\, ds. \leq
    \epsilon.
  \end{multline*}
  On the other hand, for~$\omega_\infty$, $$ \sup_{\lambda, \mu \in
    \Rl} \left| \omega_\infty (\lambda, \mu) \right| \leq 2 \,
  \int_{\left| s \right| \geq \delta} G_n (s)\, \sup_{\lambda, \mu}
  \left| \phi(\lambda, \mu) \right|\, ds = 2 \left\| \phi
  \right\|_\infty\, \int_{\left| s \right| \geq \delta} G_n (s)\,
  ds. $$ Noting that for every fixed~$\delta$, we have $$ \lim_{n
    \rightarrow \infty} \int_{\left| s \right| \geq \delta} G_n (s)\,
  ds = 0, $$ we conclude that there is~$N_\epsilon$ such that for
  every~$n \geq N_\epsilon$, $$ \sup_{\lambda, \mu \in \Rl} \left|
    \omega_\infty (\lambda, \mu) \right| \leq \epsilon. $$ Combining
  the estimates above for~$\omega_0$ and~$\omega_\infty$, we obtain
  that for every~$\epsilon > 0$, there is~$N_\epsilon$ such that for
  every~$n \geq N_\epsilon$, $$ \left\| \phi_n - \phi\right\|_\infty
  \leq \left\| \omega_0 \right\|_\infty + \left\| \omega_\infty
  \right\|_\infty \leq 2 \epsilon. $$ The lemma is proved.
\end{proof}

We are now ready to proof Theorem~\ref{LipschitzContCompact}.

\begin{proof}[Proof of Theorem~\ref{LipschitzContCompact}]
  Observe that it is sufficient to prove that there is a
  constant~$c_\alpha$ such that for every compact self-adjoint
  operator~$u$ and every bounded operator~$v$ 
  \begin{equation}
    \label{ComEst}
    \left\| [ f(u), v
      ]\right\|_\alpha \leq \, c_\alpha \, \left\| [u, v]
    \right\|_\alpha. 
  \end{equation}
  Indeed, estimate~(\ref{LipschitzEst}) immediately follows from the
  inequality above with
  \begin{equation*}
    \label{UandVoperators}
    u =
    \begin{pmatrix}
      a & 0 \\ 0 & b \\
    \end{pmatrix}
    \ \ \text{and}\ \  
    v = 
    \begin{pmatrix}
      0 & 1 \\ 1 & 0 \\
    \end{pmatrix}.
  \end{equation*}
  Let~$T_{\phi_f}$ be the double operator integral operator associated
  with the function~$\phi_f$ and spectral measures~$dE = dF = dE^u$,
  where~$dE^u_\lambda$ is the spectral measure of~$u$.  It follows
  from Theorem~\ref{TheoremOfTheCenturyCont} that
  operator~$T_{\phi_f}$ is bounded on every~$L^\alpha$, $1 < \alpha <
  \infty$ and its norm does not depend on operator~$u$.  Combining
  this fact with~\cite[Theorem~5.3]{PoSu} yields the
  estimate~(\ref{ComEst}).
\end{proof}

\bibliography{references.bib}

\providecommand{\bysame}{\leavevmode\hbox to3em{\hrulefill}\thinspace}
\begin{thebibliography}{10}

\bibitem{Bo1986}
J.~Bourgain, \emph{Vector-valued singular integrals and the {$H\sp 1$}-{BMO}
  duality}, Probability theory and harmonic analysis (Cleveland, Ohio, 1983),
  Monogr. Textbooks Pure Appl. Math., vol.~98, Dekker, New York, 1986,
  pp.~1--19.

\bibitem{Davies1988}
E.~B. Davies, \emph{Lipschitz continuity of functions of operators in the
  {S}chatten classes}, J. London Math. Soc. (2) \textbf{37} (1988), no.~1,
  148--157.

\bibitem{MikalDeLaSalle}
M.~de~la Salle, \emph{A shorter proof of a result by {P}otapov and {S}ukochev
  on {L}ipschitz functions on~${S}^p$}, ArXiv:0905.1055.

\bibitem{PSW-DOI}
B.~de~Pagter, F.~A. Sukochev, and H.~Witvliet, \emph{Double operator
  integrals}, J. Funct. Anal. \textbf{192} (2002), no.~1, 52--111.

\bibitem{DDPS1997}
P.~G. Dodds, T.~K. Dodds, B.~de~Pagter, and F.~A. Sukochev, \emph{Lipschitz
  continuity of the absolute value and {R}iesz projections in symmetric
  operator spaces}, J. Funct. Anal. \textbf{148} (1997), no.~1, 28--69.

\bibitem{Farforovskaja1967}
Y.~B. Farforovskaya, \emph{An estimate of the nearness of the spectral
  decompositions of self-adjoint operators in the {K}antorovi\v c-{R}ubin\v
  ste\u\i n metric}, Vestnik Leningrad. Univ. \textbf{22} (1967), no.~19,
  155--156.

\bibitem{Farforovskaja1968}
\bysame, \emph{The connection of the {K}antorovi\v c-{R}ubin\v ste\u\i n metric
  for spectral resolutions of selfadjoint operators with functions of
  operators}, Vestnik Leningrad. Univ. \textbf{23} (1968), no.~19, 94--97.

\bibitem{Farforovskaya1972}
\bysame, \emph{An example of a {L}ipschitz function of self-adjoint operators
  with non-nuclear difference under a nuclear perturbation.}, Zap. Nauchn. Sem.
  Leningrad. Otdel. Mat. Inst. Steklov. (LOMI) \textbf{30} (1972), 146--153.

\bibitem{Farforovskaya1974}
\bysame, \emph{Regarding the estimate of~$\|f(b) - f(a)\|$ in the
  clasess~$\mathfrak{S}_p$}, Zap. Nauchn. Sem. Leningrad. Otdel. Mat. Inst.
  Steklov. (LOMI) \textbf{39} (1974), 194--195.

\bibitem{Kato1973}
T.~Kato, \emph{Continuity of the map {$S\mapsto \mid S\mid $} for linear
  operators}, Proc. Japan Acad. \textbf{49} (1973), 157--160.

\bibitem{KosakiLipCont1992-MR1152759}
H.~Kosaki, \emph{Unitarily invariant norms under which the map {$A\to \vert
  A\vert $} is {L}ipschitz continuous}, Publ. Res. Inst. Math. Sci. \textbf{28}
  (1992), no.~2, 299--313.

\bibitem{KreinPerturbation1964}
M.~G. Kre{\u\i}n, \emph{Some new studies in the theory of perturbations of
  self-adjoint operators}, First {M}ath. {S}ummer {S}chool, {P}art {I}
  ({R}ussian), Izdat. ``Naukova Dumka'', Kiev, 1964, pp.~103--187.

\bibitem{PelNaz2009}
F.~Nazarov and V.~Peller, \emph{Lipschitz functions of perturbed operators}, C.
  R. Math. Acad. Sci. Paris \textbf{347} (2009), no.~15-16, 857--862.

\bibitem{Pe1985}
V.~V. Peller, \emph{Hankel operators in the theory of perturbations of unitary
  and selfadjoint operators}, Funktsional. Anal. i Prilozhen. \textbf{19}
  (1985), no.~2, 37--51, 96.

\bibitem{Peller1987}
\bysame, \emph{For which {$f$} does {$A-B\in S\sb p$} imply that {$f(A)-f(B)\in
  S\sb p$}?}, Operators in indefinite metric spaces, scattering theory and
  other topics (Bucharest, 1985), Oper. Theory Adv. Appl., vol.~24,
  Birkh\"auser, Basel, 1987, pp.~289--294.

\bibitem{PiXu2003}
G.~Pisier and Q.~Xu, \emph{Non-commutative {$L\sp p$}-spaces}, Handbook of the
  geometry of Banach spaces, Vol.\ 2, North-Holland, Amsterdam, 2003,
  pp.~1459--1517.

\bibitem{PoSuNGapps}
D.~Potapov and F.~Sukochev, \emph{Unbounded {F}redholm modules and double
  operator integrals}, J. Reine Angew. Math. \textbf{626} (2009), 159--185.

\bibitem{PoSu}
D.~Potapov and F.~Sukochev, \emph{Lipschitz and commutator estimates in
  symmetric operator spaces}, J. Operator Theory \textbf{59} (2008), no.~1,
  211--234.

\bibitem{Widom1984}
H.~Widom, \emph{When are differentiable functions differentiable?}, Linear and
  Complex Analysis Problem Book, Lecture Notes in Mathematics, vol. 1043, 1984,
  pp.~184--188.

\end{thebibliography}

\end{document}